\newtheorem{theorem}{Theorem}
\newcommand*\pFq[6][8]{%
  \begingroup
  \pFqmuskip=#1mu\relax
  \mathcode`\,=\string"8000
  \begingroup\lccode`\~=`\,
  \lowercase{\endgroup\let~}\pFqcomma
  {}_{#2}F_{#3}{\left(\genfrac..{0pt}{}{#4}{#5};#6\right)}%
  \endgroup
}
\newcommand{\pFqcomma}{\mskip\pFqmuskip}
\renewcommand*\env@matrix[1][\arraystretch]{%
  \edef\arraystretch{#1}%
  \hskip -\arraycolsep
  \let\@ifnextchar\new@ifnextchar
  \array{*\c@MaxMatrixCols c}}
\begin{document}

\author{Kilian B\"onisch}
\email{boenisch@physik.uni-bonn.de}

\author{Vasily Golyshev}
\address{PRIMI, Polo di Ricerca Interuniversitario in Matematica e Informatica,  Italy}
\email{vasily.v.golyshev@gmail.com}

\author{Albrecht Klemm}
\address{Department of Mathematical and Physical Sciences, University of Sheffield \newline S3 7RH Sheffield, United Kingdom}
\email{a.klemm@sheffield.ac.uk}

\title{Fibering out Calabi-Yau motives}

\begin{abstract}
    We prove the modularity of mixed periods associated with singular fibers of specific families of Calabi-Yau threefolds. This is done by ``fibering out'', i.e.\ by expressing these periods as integrals of periods of families of K3 surfaces and by using modularity properties of the latter. Besides classical periods of holomorphic modular forms and meromorphic modular forms with vanishing residues, the computations lead to new interesting periods associated with meromorphic modular forms with non-vanishing residues as well as contours between CM points. 
\end{abstract}

\maketitle

\section{Introduction}

In the main part of this paper we illustrate the method of ``fibering out'' with the famous family of hypersurfaces $X_\psi$ given by the vanishing set
\begin{align*}
    x_1^5 + x_2^5 + x_3^5 + x_4^5 + x_5^5 -5 \, \psi \, x_1 \, x_2 \, x_3 \, x_4 \, x_5 \, = \, 0
\end{align*}
in $\mathbb{P}^4$. The so-called conifold fiber $X_1$ is singular and in \cite{Schoen} it has been shown that the Galois representations on the middle cohomology of a resolution $\widehat{X_1}$ of $X_1$ are modular. More concretely, for all primes $p \neq 5$ and any prime $\ell \neq p$ one has
\begin{align*}
    \det(1-\text{Frob}_p \, T \, | \, H^3(\overline{\widehat{X_1}},\mathbb{Q}_{\ell})) \, = \, 1-a_p \, T + p^3 \, T^2
\end{align*}
with the Hecke eigenvalues $a_p$ of the unique newform $f \in S_4(\Gamma_0(25))$ with Hecke eigenvalue $a_2 = 1$. This suggests that the period matrix of $\widehat{X_1}$ should be given by the period matrix of $f$, and this has been numerically verified in \cite{ADEK}. In the following sections we prove this and extend the result to a mixed period matrix of rank four associated with the limit $\psi \rightarrow 1$. To do this, we follow steps outlined in the PhD thesis of one of the authors \cite{KilianThesis}, which are based on the idea of ``fibering out'' from \cite{GolyshevZagier}. In Section \ref{sec:ConifoldPeriods} we review the structure of the mixed period matrix $T$ of rank four associated with the limit $\psi \rightarrow 1$. In Section \ref{sec:FiberingOut} we fiber out periods of $X_\psi$, i.e. we express them as integrals of period functions of a family of K3 surfaces. In Section \ref{sec:Modularity} we finally use modularity properties of the family of K3 surfaces to express all mixed periods associated with the conifold fiber as integrals of modular forms. Qualitatively, the resulting identities for the most interesting part of $T$ have the form
\begin{align*}
  \begin{pmatrix}
    w_+ & e_+ & a_+ \\
    w_- & e_- & a_- \\
    b & d & c \\
  \end{pmatrix}
  \, = \,
  \begin{pmatrix}[1.5]
    \oint_{\gamma_+} \dd \tau \\
    \oint_{\gamma_-} \dd \tau \\
    \int_{\tau_-}^{\tau_+} \dd \tau
  \end{pmatrix}
  \, (f_{50}, F_{50}, g_{50}) \, ,
\end{align*}
where $\gamma_{\pm}$ are paths between cusps, $\tau_\pm$ are CM points, $f_{50}$ is a holomorphic modular form, $F_{50}$ is a meromorphic modular form with vanishing residues and $g_{50}$ is a meromorphic modular form with non-vanishing residues. \\

In Appendix \ref{sec:AppendixOtherFamilies} we list other families of Calabi-Yau threefolds for which the fibering out proves the modularity of the associated mixed period matrix. 

\medskip

\bf Acknowledgements. \rm Conversations with Emmanuel Scheidegger and Don Zagier  have contributed a lot to our understanding of the subject. We thank Charles Doran for his private communication and for pointing us to \cite{DoranIdentities}. We thank the members of the International Groupe de Travail on differential equations in Paris, and specifically Spencer Bloch, Matt Kerr, and Wadim Zudilin, for their insights.

\section{Mixed periods associated with the conifold fiber}
\label{sec:ConifoldPeriods}

As long as $\psi^5 \neq 1$, the hypersurface $X_\psi$ is a smooth Calabi-Yau threefold, i.e.\ a smooth projective variety with trivial canonical bundle and Hodge numbers $h^{1,0} = h^{2,0} = 0$. The Hodge diamond looks as follows:
  \begin{align*}
    \begin{array}{ccccccc}
        &   &     & 1 &     &   &   \\
        &   & 0   &   & 0   &   &   \\
        & 0 &     & 1 &     & 0 &   \\
      1 &   & 101 &   & 101 &   & 1 \\
        & 0 &     & 1 &     & 0 &   \\
        &   & 0   &   & 0   &   &   \\
        &   &     & 1 &     &   &   \\
    \end{array}
  \end{align*}
 This family was studied in the famous paper \cite{MirrorSymmetry} and led to the discovery of mirror symmetry. Of particular importance in this context is the group
\begin{align*}
    G \, = \, \{ (\alpha_1,\alpha_2,\alpha_3,\alpha_4,\alpha_5) \, \in \mu_5^5 \ | \ \alpha_1 \, \alpha_2 \, \alpha_3 \, \alpha_4 \, \alpha_5 \, = \, 1  \} \, ,
\end{align*}
where $\mu_5$ denotes the set of fifths roots of unity. This acts on the fibers of the family by
\begin{align*}
    (x_1 : x_2 : x_3 : x_4 : x_5) \, \mapsto \, (\alpha_1 \, x_1 : \alpha_2 \, x_2 : \alpha_3 \, x_3 : \alpha_4 \, x_4 : \alpha_5 \, x_5) \, .
\end{align*}
The most interesting part of the middle cohomology of the smooth fibers is the four-dimensional part 
\begin{align*}
    V_\psi = H^3(X_\psi)^G
\end{align*}
that is invariant under the action of $G$. Equivalently, this four-dimensional part arises as the complete middle cohomology of a resolution of the quotient $X_\psi/G$, the so-called mirror of $X_\psi$. The Hodge numbers of $V_\psi$ are~$1 \ 1 \ 1 \ 1$ and we are mainly interested in the mixed period matrix associated with the limit $\psi \rightarrow 1$. To give a concrete description of this period matrix we define the holomorphic three-form
\begin{align*}
    \Omega_\psi \, = \, -5 \, \psi \, \text{Res} \,  \frac{\sum\limits_{i=1}^5 (-1)^{i+1} \, x_i \, \dd x_1 \wedge \cdots \wedge \widehat{\dd x_i} \wedge \cdots \wedge \dd x_5}{x_1^5 + x_2^5 + x_3^5 + x_4^5 + x_5^5 - 5 \, \psi \, \, x_1 \, x_2 \, x_3 \, x_4 \, x_5} \, ,
\end{align*}
where $\widehat{\dd x_i}$ denotes the omission of the differential $\dd x_i$. For $|\psi| > 1$ we can integrate $\Omega_\psi$ over a suitable three-dimensional torus to obtain the period\footnote{To derive this integral expression, one first writes the integral of the residue over the three-dimensional torus in $X_\psi$ as an integral over a four-dimensional torus in the complement $\mathbb{P}^4 \smallsetminus X_\psi$. Using the homogeneity of the integrand, one can then express the latter as an integral over a five-dimensional torus.}
\begin{align*}
  \int_{S_1^3} \Omega_\psi \,
  &= \, \frac{1}{(2\pi i)^2} \oint \frac{\dd x_1}{x_1} \cdots \oint \frac{\dd x_5}{x_5} \frac{1}{1-\frac{1}{5 \, \psi}\frac{x_1^5 + \cdots + x_5^5}{x_1 \cdots x_5}} \\
  &= \,  (2\pi i)^3 \sum_{n=0}^\infty \frac{(5 \, n)!}{n!^5} \left( \frac{1}{5 \, \psi} \right)^{5 \, n} \\
  & = \, (2\pi i)^3 \, \pFq{4}{3}{\frac{1}{5},\frac{2}{5},\frac{3}{5},\frac{4}{5}}{1,1,1}{1/\psi^5} \, .
\end{align*}
To obtain periods associated with other three-cycles we can use the monodromy of this period. This is particularly well understood since, in terms of the variable $z = 1 / (5 \, \psi)^5$, the power series above is annihilated by the hypergeometric Picard-Fuchs operator
\begin{align*}
    \mathcal{L} \, = \, \Theta^4-5^5 \, z \, (\Theta+1/5) \, (\Theta+2/5) \, (\Theta+3/5) \, (\Theta+4/5) \qquad \text{with} \qquad \Theta \, = \, z \dv{}{z} \, .
\end{align*}
For $0 < z < 1/5^5$, a basis of solutions of $\mathcal{L}$ is given by
\begin{align*}
    \varpi(z) \, = \,
        \left(
        \begin{array}{r}
            f_1(z) \\ [1 mm]
            \log(z) \, f_1(z) + f_2(z) \\ [1 mm]
            \frac{1}{2} \, \log(z)^2 \, f_1(z) + \log(z) \, f_2(z) + f_3(z) \\ [1 mm]
            \frac{1}{6} \, \log(z)^3 \, f_1(z) + \frac{1}{2} \, \log(z)^2 \, f_2(z) + \log(z) \, f_3(z) + f_4(z) 
        \end{array}
        \right)
 \end{align*}
with convergent power series normalized by $f_1(0) = 1$ and $f_2(0)=f_3(0)=f_4(0)=0$. In terms of this basis, the period over the three-dimensional torus corresponds to $(2\pi i)^3 \, \varpi_1$ and a basis of period functions is given by
\begin{align*}
\renewcommand{\arraystretch}{1.2}
    \Pi \, = \,
    \left(
    \begin{array}{c c c c}
      (2\pi i)^3 & 0 & 0 & 0 \\
      0 & (2\pi i)^2 & 0 & 0 \\
      50 \, \frac{(2\pi i)^3}{24} &  \frac{1}{2} \, (2 \pi i)^2 & -5 \, (2\pi i) & 0 \\
      -200 \, \zeta(3) & 50 \, \frac{(2\pi i)^2}{24} & 0 & 5
    \end{array}
    \right)
    \, \varpi \, .
\end{align*}
In this basis, the monodromy matrices (acting by $\Pi \mapsto M \Pi$) are integral and symplectic with respect to the intersection matrix
\begin{align*}
  \Sigma \, = \,
  \left(
  \begin{array}{cccc}
    0 & 0 & 0 & 1 \\
    0 & 0 & 1 & 0 \\
    0 & -1 & 0 & 0 \\
    -1 & 0 & 0 & 0 \\
  \end{array}
  \right)
  \, .
\end{align*}
For loops counterclockwise around $z=0$ and $z=1/5^5$, the monodromy matrices are given by
\begin{align*}
  \renewcommand{\arraystretch}{1.2}
  M_0 \, = \,
  \left(
  \begin{array}{c c c c}
    1 & 0 & 0 & 0 \\
    1 & 1 & 0 & 0 \\
    -2  & -5  & 1 & 0 \\
     5 & 3  & -1 & 1 \\
  \end{array}
  \right)
  \qquad \text{and} \qquad
  M_{1/5^5} \, = \,
  \left(
  \begin{array}{c c c c}
    1 & 0 & 0 & -1 \\
    0 & 1 & 0 & 0 \\
    0 & 0 & 1 & 0 \\
    0 & 0 & 0 & 1 
  \end{array}
  \right) \, .
\end{align*}
Finally, the mixed period matrix $T$ associated with the limit $\delta = 1-5^5 \, z \rightarrow 0$ is defined by \begin{align*}
  \Pi(z) \, = \, T \,
  \begin{pmatrix}
    \log(\delta) \, \nu(\delta)+O(\delta^3) \\
    1+O(\delta^3) \\
    \delta^2+O(\delta^3) \\
    \nu(\delta)
  \end{pmatrix}
\end{align*}
with the so-called vanishing period function $\nu(\delta) = \delta+O(\delta^2)$. Using the monodromy matrices, the intersection pairing and that $f_1(z)>0$ for $0<z<1/5^5$, one finds that
\begin{align}
  T \, = \,
  \begin{pmatrix}
    -2\pi i \, \sqrt{5} & b & d & c \\
    0 & w_+ & e_+ & a_+ \\
    0 & \frac{1}{2} \, w_++w_- & \frac{1}{2} \, e_++e_- & \frac{1}{2} \, a_++a_- \\
    0 & 0 & 0 & (2\pi i)^2 \, \sqrt{5}
  \end{pmatrix}
  \label{eq:MixedPeriods}
\end{align}
with real constants $w_+$, $e_+$, $a_+$ and purely imaginary constants $w_-$, $e_-$, $a_-$, $b$, $d$, $c$ satisfying
\begin{align*}
  \det
  \begin{pmatrix}
    w_+ & e_+ \\
    w_- & e_-
  \end{pmatrix}
  \, &= \, -(2\pi i)^3 \, \tfrac{5}{2} \\
  \det
  \begin{pmatrix}
    w_+ & a_+ \\
    w_- & a_-
  \end{pmatrix}
  \, &= \, -(2\pi i)^2 \, \sqrt{5} \, b \\
  \det
  \begin{pmatrix}
    e_+ & a_+ \\
    e_- & a_-
  \end{pmatrix}
  \, &= \, -(2\pi i)^3 \, \tfrac{9}{4} -(2\pi i)^2 \, \sqrt{5} \, d \, .
\end{align*} 
It is straightforward to compute the mixed periods numerically and one obtains
\begin{center}
    \begin{tabular}{l c D{.}{.}{4.50}}
      $w_+$  & = &   320.871302959778116770497485624017226038 \cdots   \\
      $w_-$  & = & -1536.675109826085372724756354590337175648 \cdots i \\
      $e_+$  & = &    -6.893856185212988044137977532235735104 \cdots   \\
      $e_-$  & = &    34.947789474177653892854041280741645293 \cdots i \\
      $a_+$  & = &    37.397710905400938350547117646682006554 \cdots   \\
      $a_-$  & = &  -252.169016964624605484461069839609176011 \cdots i \\
      $b$    & = &  -265.593780202397705806104094596997598070 \cdots i \\
      $d$    & = &    -1.434849336934471921847071711478709892 \cdots i \\
      $c$    & = &     6.128728877854787485401183630654047566 \cdots i \, .
    \end{tabular}
\end{center}

\section{Fibering out}
\label{sec:FiberingOut}

The goal of fibering out is to express the period functions $\Pi$ and their derivatives in terms of integrals of period functions of a family of K3 surfaces. To do this we use the elementary identity 
\begin{equation*}
  \begin{aligned}
    \frac{1}{(2\pi i)^3} \, \Pi_1(z) \, &= \, \sum_{n=0}^\infty \frac{(5 \, n)!}{n!^5} \, z^n \\
    &= \, \frac{1}{2\pi i}\oint \left( \sum_{n=0}^\infty \sum_{k=-n}^\infty \frac{(5\, n+k)!}{n!^4 \, (n+k)!} \, t^k \, z^n \right) \frac{\dd t}{t} \\
    &= \, \frac{1}{2\pi i}\oint \frac{1}{1-t} \, \left(\sum_{n=0}^\infty \frac{(4\, n)!}{n!^4} \, \left(\frac{z}{t \, (1-t)^4}\right)^n  \right) \frac{\dd t}{t} \\
    &= \, \frac{1}{2\pi i}\oint \frac{1}{1-t} \ \pFq{3}{2}{\frac{1}{2},\frac{1}{4},\frac{3}{4}}{1,1}{2^8 \frac{z}{t \, (1-t)^4}} \frac{\dd t}{t} \, ,
  \end{aligned}
\end{equation*}
where the contour can be chosen to be the counterclockwise circle of radius $|t| = 1/5$. Identities like this have previously been studied in \cite{DoranIdentities}. In the remainder of this section we consider analytic continuations and derivatives of this identity, which allow to derive similar identities for other components of $\Pi$ and their derivatives.

\subsection*{Identities for other components of $\boldsymbol{\Pi}$}

To write our identity in a more conceptual form, we consider the operator $\Theta^3 - 2^8 \, t \, (\Theta+1/2) \, (\Theta+1/4) \, (\Theta+3/4)$ with $\Theta= t \dv{}{t}$. This annihilates the rank three hypergeometric function above and for $0 < t < 1/2^8$ a basis of solutions is given by
\begin{align*}
  \varrho(t) \, = \,
  \begin{pmatrix}
    (2\pi i)^2 & 0 & 0 \\
    0 & 2 \pi i & 0 \\
    0 & 0 & 2 
  \end{pmatrix}
  \left(
  \begin{array}{r}
    f_1(t) \\
    f_1(t) \, \log(t) + f_2(t) \\
    \frac{1}{2}f_1(t) \log(t)^2 + f_2(t) \, \log(t) + f_3(t)
  \end{array}
  \right)
\end{align*}
with power series normalized by $f_1(0) = 1$ and $f_2(0) = f_3(0) = 0$. In terms of $\phi_z(t) = \frac{z}{t \, (1-t)^4}$ and $\mathcal{M}_z = \mathbb{P}^1 \smallsetminus \phi_z^{-1}(\{0,1/2^8,\infty \})$ we can then consider the integration map
\begin{align*}
  I_z : \ \pi_1(\mathcal{M}_z, t_0) \, &\rightarrow \mathbb{C}^3 \\
  \gamma \, &\mapsto \int_{\gamma} \frac{1}{1-t} \, (\phi_z^*\varrho)(t) \, \frac{\dd t}{t} \, ,
\end{align*}
where the integrand is understood to be analytically continued along the contour. In terms of this map our original identity reads
\begin{align*}
  \Pi_1(z) \, = \,
  \begin{pmatrix}
    1 \\
    0 \\
    0
  \end{pmatrix}
  \cdot I_z(\gamma_4^{-1}\gamma_6^{-1}\gamma_2^{-1})
\end{align*}
with the following chosen basis of $\pi_1(\mathcal{M}_z, t_0)$ for $t_0 \gg 0$:
\begin{figure}[h]
  \centering
  \begin{tikzpicture}
    \draw [fill=black] (-2,0) circle (0.05) node [below] {$0$};
    \draw [fill=black] (0,0) circle (0.05) node [below] {$t_-$};
    \draw [fill=black] (2,0) circle (0.05) node [below] {$t_+$};
    \draw [fill=black] (4,0) circle (0.05) node [below] {$1$};
    \draw [fill=black] (6,0) circle (0.05);
    \draw [fill=black] (4,2) circle (0.05);
    \draw [fill=black] (4,-2) circle (0.05);
    \draw [fill=black] (12,0) circle (0.05) node [below] {$\infty$};
    
    \draw (9-0.08,-0.08) -- (9+0.08,0.08);
    \draw (9-0.08,0.08) -- (9+0.08,-0.08);
    \draw (9,0) node [below] {$t_0$};

    \draw [->] (9.5,-0.1) .. controls (14,-1.5) and (14,1.5) .. (9.5,0.1);
    \draw (13.25,0) node {$\gamma_1$};

    \draw [->] (8.5,0.1) .. controls (4,0.5) and (4,-0.5) .. (8.5,-0.1);
    \draw (4.8,0) node {$\gamma_3$};
    
    \draw [->] (8.6,0.3) .. controls (1.2,1.5) and (1.2,-1.5) .. (8.6,-0.3);
    \draw (2.8,0) node {$\gamma_5$};
    
    \draw [->] (8.7,0.5) .. controls (-1.5,2) and (-1.5,-2) .. (8.7,-0.5);
    \draw (0.75,0) node {$\gamma_6$};

    \draw [->] (8.8,0.7) .. controls (-4.2,2.5) and (-4.2,-2.5) .. (8.8,-0.7);
    \draw (-1.2,0) node {$\gamma_7$};
     
    \draw [->] (8.8,1.15) .. controls (2,3.2) and (1,1.8) .. (8.8,0.9);
    \draw (3,2.2) node {$\gamma_2$};

    \draw [<-] (8.8,-1.15) .. controls (2,-3.2) and (1,-1.8) .. (8.8,-0.9);
    \draw (3,-2.2) node {$\gamma_4$};
  \end{tikzpicture}
\end{figure}

Note that $I$ satisfies the cocycle property $I_z(\gamma \, \gamma') = I_z(\gamma) + M_{\gamma}I_z(\gamma')$ with the monodromy matrix $M_{\gamma}$ corresponding to the action on $\phi_z^*\varrho$. In terms of
\begin{align*}
  T \, = \,
  \begin{pmatrix}
    1 & 0 & 0 \\
    1 & 1 & 0 \\
    1 & 2 & 1
  \end{pmatrix}
  \qquad \text{and} \qquad 
  W \, = \, 
  \begin{pmatrix}
    0 & 0 & 2 \\
    0 & -1 & 0 \\
    1/2 & 0 & 0
  \end{pmatrix}
\end{align*}
the monodromy matrices are given as follows:
\begin{align*}
  \begin{array}[h]{c|ccccccc}
    \gamma & \gamma_1 & \gamma_2 & \gamma_3 & \gamma_4 & \gamma_5 & \gamma_6 & \gamma_7 \\ \hline
    M_{\gamma} & T^5 & -T^{-1}WT & -W & -TWT^{-1} & -W & T^{-1}WT^{-2}WT^{-1} & -W
  \end{array}
\end{align*}

To compute analytic continuations of $I$, we need to study how $\mathcal{M}_z$ changes with $z$. For a loop which starts at some $0<z<1/5^5$ and encircles $0$ (respectively $1/5^5$) counterclockwise, the action on the holes of $\mathcal{M}_z$ is depicted by the solid (respectively dashed) arrows below:
\begin{figure}[h]
  \centering
  \begin{tikzpicture}[scale=0.8]
    \clip (-4.25,-2.1) rectangle(10.5,2.1);
    
    \draw[fill=black] (-2,0) circle (0.5ex) node[below=2] {$0$};
    \draw[fill=black] (0,0) circle (0.5ex);
    \draw[fill=black] (2,0) circle (0.5ex);
    \draw[fill=black] (4,0) circle (0.5ex) node[below=2] {$1$};
    \draw[fill=black] (6,0) circle (0.5ex);
    \draw[fill=black] (4,2) circle (0.5ex);
    \draw[fill=black] (4,-2) circle (0.5ex);
    \draw[fill=black] (10,0) circle (0.5ex) node[below=2] {$\infty$};
    
    \draw[-{Stealth[scale=1]}] (-0.03,0.347) arc (10:350:2);
    \draw[-{Stealth[scale=1]}] (5.97,0.347) arc (10:80:2);
    \draw[-{Stealth[scale=1]}] (3.653,1.97) arc (100:170:2);
    \draw[-{Stealth[scale=1]}] (2.03,-0.347) arc (190:260:2);
    \draw[-{Stealth[scale=1]}] (4.347,-1.97) arc (280:350:2);
    
    \draw[-{Stealth[scale=1]},dashed] (2*1.97-2,2*0.171) arc (20:160:1);
    \draw[-{Stealth[scale=1]},dashed] (2*1.03-2,-2*0.171) arc (200:340:1);
  \end{tikzpicture}
\end{figure}

It follows that e.g.
\begin{equation}
\begin{aligned}
  &(-3 \, \Pi_2 + \Pi_3 - \Pi_4)(z) \\
  = \, & (4\, \Pi+M_{1/5^5} \, M_0 \, \Pi)_1(z) \\
  = \, &
         \begin{pmatrix}
           9 \\
           -8 \\
           2
         \end{pmatrix}
         \cdot I_z(\gamma_1) +
         \begin{pmatrix}
           14 \\
           12 \\
           2
         \end{pmatrix}
         \cdot I_z(\gamma_2) +
         \begin{pmatrix}
           5 \\
           -8 \\
           2
         \end{pmatrix}
         \cdot I_z(\gamma_4) +
         \begin{pmatrix}
           -3 \\
           -12 \\
           -10
         \end{pmatrix}
         \cdot I_z(\gamma_6) +
         \begin{pmatrix}
           2 \\
           4 \\
           2
         \end{pmatrix}
         \cdot I_z(\gamma_7) \\
  = \, &
         \begin{pmatrix}
           9 \\
           -8 \\
           2
         \end{pmatrix}
         \cdot I_z(\gamma_1) +
         \begin{pmatrix}
           14 \\
           12 \\
           2
         \end{pmatrix}
         \cdot I_z(\gamma_2) +
         \begin{pmatrix}
           5 \\
           -8 \\
           2
         \end{pmatrix}
         \cdot I_z(\gamma_4) +
         \begin{pmatrix}
           -1 \\
           -8 \\
           -8
         \end{pmatrix}
         \cdot I_z(\gamma_7) \, ,
\end{aligned}
\label{eq:RegularPeriodIdentity}
\end{equation}

where in the last step we used that $(-3,-12,-10) \cdot I_z(\gamma_6) = (-3,-12,-10) \cdot I_z(\gamma_7)$ since the integrand is holomorphic around $t_\pm$. Avoiding contributions from $I_z(\gamma_6)$ will be beneficial since for $z \rightarrow 1/5^5$ the cycle $\gamma_6$ gets pinched at $t_{\pm} = 1/5$. Also note that, while individual terms $I_z(\gamma)$ can depend on $t_0$, the linear combinations above are independent of $t_0$. More generally, a linear combination $\sum_i v_i \cdot I_z(\gamma_i)$ is independent of $t_0$ as long as $\sum_i v_i \cdot (M_{\gamma_i}-1) = 0$. 

\subsection*{Identities for derivatives of $\boldsymbol{\Pi}$} By taking derivatives and integrating by parts, we can obtain identities for derivatives of $\Pi$. More precisely, for linear combinations $\sum_i v_i \cdot I_z(\gamma_i)$ that satisfy $\sum_i v_i \cdot (M_{\gamma_i}-1) = 0$ one finds that
\begin{align*}
    \left(\dv{}{z}\right)^k \, \sum_i v_i \cdot I_z(\gamma_i) \, = \, \sum_i v_i \cdot I_z^{(k)}(\gamma_i)
\end{align*}
where e.g.
\begin{align*}
  I_z'(\gamma) \, &= \, \frac{1}{z} \int_{\gamma} \frac{5}{(1-5 \, t)^2} \, (\phi_z^*\varrho)(t) \, \dd t \\
  I_z''(\gamma) \, &= \, \frac{1}{z^2} \int_{\gamma} \frac{30 \, t \, (3-5 \, t)}{(1-5 \, t)^4} \, (\phi_z^*\varrho)(t) \, \dd t \, .
\end{align*}
Note that for $z < 1/5^5$ the integrands above have poles at $t=1/5$ but the residues vanish. 

\newpage

\subsection*{Evaluation at $\boldsymbol{z=1/5^5}$}
Because of the cocycle property, $I_z$ is uniquely determined by its values on generators of $\pi_1(\mathcal{M}_z, t_0)$. Naively, this leaves $7 \cdot 3 = 21$ complex degrees of freedom, but there are some simplifications we can use:
\begin{itemize}
\item[-] The integrand is holomorphic at $t=1$ and hence $I_z(\gamma_5) = I_z(\gamma_3)$.
\item[-] For $z = 1/5^5$ the points $t_\pm$ are equal and we don't have to consider $\gamma_6$ anymore. Also, the integrand is holomorphic around $t_{\pm} = 1/5$ and hence $I_{1/5^5}(\gamma_7) = I_{1/5^5}(\gamma_3)$.
\item[-] In the limit $t_0 \rightarrow \infty$ we have $I_z(\gamma_1) = 0$.
\item[-] There are two-dimensional spaces of integrands which are holomorphic around the points encircled by $\gamma_2$, $\gamma_3$ and $\gamma_4$, respectively. It follows that $I_z(\gamma_2) \sim (2,-2,3)$, $I_z(\gamma_3) \sim (2,0,1)$ and $I_z(\gamma_4) \sim (2,2,3)$.
\item[-] For real $t_0$ we must have $\overline{I_z(\gamma_3)} = \mathrm{diag}(1,-1,1) \cdot I_z(\gamma_3)$ and $\overline{I_z(\gamma_2)} = \mathrm{diag}(1,-1,1) \cdot I_z(\gamma_4^{-4})$.
\end{itemize}
We conclude that for $z = 1/5^5$ and $t_0 \rightarrow \infty$ there are only three real degrees of freedom. We capture these by defining\footnote{That the image lies in $\mathbb{Z}^3$ follows from the given first entries being even and the fact that for every $M \in \langle T, W\rangle$ either $M$ or $M \, W$ has integer entries.} three cocycles $\widetilde{r}_\pm,\widetilde{r}_{\text{b}} : \pi_1(\mathcal{M}_{1/5^5}, t_0) \rightarrow \mathbb{Z}^3$ which vanish on $\gamma_1$, satisfy $\widetilde{r}(\gamma_7) = \widetilde{r}(\gamma_5) = \widetilde{r}(\gamma_3)$ and are otherwise given as follows:
\begin{table}[h]
  \centering
  \tabulinesep=0.5mm
  \begin{tabu}[h]{c|ccc}
    $\gamma$ & $\gamma_2$ & $\gamma_3$ & $\gamma_4$ \\ \hline
    $\widetilde{r}_+(\gamma)$ & $\begin{pmatrix} -2 \\ 2 \\ -3 \end{pmatrix}$ & $\begin{pmatrix} -4 \\ 0 \\ -2 \end{pmatrix}$ & $\begin{pmatrix} -2 \\ -2 \\ -3 \end{pmatrix}$ \\
    $\widetilde{r}_-(\gamma)$ & $\begin{pmatrix} -2 \\ 2 \\ -3 \end{pmatrix}$ & $\begin{pmatrix} 0 \\ 0 \\ 0 \end{pmatrix}$ & $\begin{pmatrix} 2 \\ 2 \\ 3 \end{pmatrix}$ \\
    $\widetilde{r}_{\text{b}}(\gamma)$ & $\begin{pmatrix} -2 \\ 2 \\ -3 \end{pmatrix}$ & $\begin{pmatrix} -2 \\ 0 \\ -1 \end{pmatrix}$ & $\begin{pmatrix} -2 \\ -2 \\ -3 \end{pmatrix}$ \\
  \end{tabu}
\end{table}

Note that
\begin{align*}
  \widetilde{r}_{\text{b}}(\gamma) \, = \, (M_\gamma - 1)
  \begin{pmatrix}
    0 \\
    0 \\
    1
  \end{pmatrix}
\end{align*}
and hence $\widetilde{r}_b$ is a coboundary (up to a scaling the unique coboundary that vanishes on $\gamma_1$). \\

The simplifications above apply analogously for the derivatives $I^{(k)}_{1/5^5}$, except that in general there are contributions from $I_{1/5^5}^{(k)}(\gamma_7\gamma_5^{-1})$, i.e. from the residue at $t=1/5$. It follows that there are $\omega_+,\eta_+,\alpha_+, \omega_{\mathrm{b}}, \eta_{\mathrm{b}}, \alpha_{\mathrm{b}} \in \mathbb{R}$ and $\omega_-,\eta_-,\alpha_- \in i \mathbb{R}$ such that for all $\gamma \in \pi_1(\mathcal{M}_{1/5^5}, t_0)$
\begin{align}
  \lim_{t_0 \rightarrow \infty}
  \begin{pmatrix}
    1 & 0 & 0 \\
    0 & \frac{7}{10} \frac{1}{5^5} & \frac{1}{2} \frac{1}{5^{10}} \\
    0 & -\frac{1}{5^5} & 0
  \end{pmatrix}
  \begin{pmatrix}
    I_{1/5^5} \\
    I'_{1/5^5} \\
    I''_{1/5^5}
  \end{pmatrix}
  (\gamma) \, \equiv\,
  \begin{pmatrix}
    \omega_+ & \omega_- \\
    \eta_+ & \eta_- \\
    \alpha_+ & \alpha_-
  \end{pmatrix}
  \begin{pmatrix}
    \widetilde{r}_+ \\
    \widetilde{r}_-
  \end{pmatrix}
  (\gamma) +
  \begin{pmatrix}
    \omega_{\text{b}} \\
    \eta_{\text{b}} \\
    \alpha_{\text{b}} 
  \end{pmatrix}
  \widetilde{r}_{\text{b}}(\gamma)
  \ \ \text{mod} \,
  \begin{pmatrix}
    0 \\
    0 \\
    \frac{1}{2} (2\pi i)^2 \sqrt{5} \, \mathbb{Z}^3
  \end{pmatrix}
  \label{eq:FullNonModularPeriodIdentity}
\end{align}
and from Equation (\ref{eq:MixedPeriods}) and Equation (\ref{eq:RegularPeriodIdentity}) we can read off that
\begin{align*}
  \begin{pmatrix}
    \omega_+ & \omega_- \\
    \eta_+ & \eta_- \\
    \alpha_+ & \alpha_-
  \end{pmatrix}
  \, = \,
  \begin{pmatrix}
    w_+ & w_- \\
    e_+ & e_- \\
    a_+ & a_-
  \end{pmatrix}
  \,
  \begin{pmatrix}
    -1/4 & 0 \\
    0 & -1/10
  \end{pmatrix}
  \, .
\end{align*}
Here, we used that the residues of the integrand in the second row vanish while for the last row we have $-\frac{1}{5^5} I'_{1/5^5}(\gamma_7\gamma_5^{-1})  =  (2\pi i)^2 \, \sqrt{5} \, (-3,2,-3/2)$. 

\newpage

\section{Modularity of the mixed periods}
\label{sec:Modularity}

We now prove the modularity of the mixed periods using the results form the previous section and the modularity of $\phi_{1/5^5}^*\varrho$. First note that in terms of the normalized Hauptmodul $h_2(\tau) = q^{-1} + O(q)$ of $\Gamma_0^*(2)$ and $t_2 = \frac{1}{h_2+104}$, we have
\begin{align*}
  t_2^* \varrho(\tau) \, = \, (2\pi i)^2 \, 
  \begin{pmatrix}
    1 \\
    \tau \\
    \tau^2
  \end{pmatrix} \, E(\tau)
\end{align*}
with the unique Eisenstein series $E \in M_2(\Gamma_0(2))$ normalized by $E(\tau) = 1+ O(q)$. It is not clear that this helps since we need to evaluate the pullback $\phi_z^*\varrho$ and not $\varrho$. However, for $z = 1/5^5$, we can use that there are modular solutions to $\phi_{1/5^5}(t(\tau)) = t_2(5 \, \tau)$. We fix the solution that is given in terms of the normalized Hauptmodul $h_{50}(\tau) = q^{-1} + O(q)$ of $\Gamma_0^*(50)$ by $t_{50}  =  \frac{(1-h_{50}) \, (3+h_{50})^2}{5 \, (1-h_{50} - h_{50}^2)}$. In terms of the Dedekind eta function, the Hauptmodul $h_{50}$ can be expressed by
\begin{align*}
  h_{50}(\tau) \, = \, \frac{\eta(\tau) \, \eta(50 \, \tau)}{\eta(2 \, \tau) \, \eta(25 \, \tau)} + \frac{\eta(2 \, \tau) \, \eta(25 \, \tau)}{\eta(\tau) \, \eta(50 \, \tau)} - 1 \, .
\end{align*}
We obtain the pullback
\begin{align*}
  t_{50}^* \left(\frac{1}{t \, (1-t)} \ \phi_{1/5^5}^*\varrho \, \dd t \right) = \,
  (2\pi i)^3 \, 
  \begin{pmatrix}
    1 \\
    5 \, \tau \\
    25 \, \tau^2
  \end{pmatrix}
  \,
  f_{50} \, \dd \tau \, ,
\end{align*}
where $f_{50}(\tau) = 5 \, f(\tau) - 20 \, f(2 \, \tau)$ in terms of the newform $f$ associated with the conifold fiber. The action of $\Gamma_0^*(50)$ on $f_{50}$ gives $f_{50}|_4 \gamma = \chi_{\gamma} \, f_{50}$ where the character $\chi$ is trivial on $\Gamma_0(50)$ and evaluates to $-1$ and $1$ for the Atkin-Lehner transformations $W_2$ and $W_{25}$, respectively. For the evaluation of derivatives of the period functions we also define the modular forms
\begin{align*}
  g_{50} \, = \, -\frac{5 \, t_{50} \, (1-t_{50})}{(1-5 \, t_{50})^2} \, f_{50} \, , \quad 
  F_{50} \, = \, \frac{t_{50}\, (1-t_{50}) \, (7+20 \, t_{50}+25 \, t_{50}^2)}{2 \, (1-5 \, t_{50})^4} \, f_{50} \, .
\end{align*}

\begin{theorem}
  The numbers $w_\pm$, $e_\pm$ and $a_\pm$ are periods of the modular forms $f_{50}$, $F_{50}$ and $g_{50}$. More precisely, there are cocycles $r_\pm : \Gamma_0^*(50) \rightarrow \mathbb{Z}^3$ such that for every $\gamma = \left(\substack{ a\, b\\ c\, d}\right) \in \Gamma_0^*(50)$ and every choice of contour we have
  \begin{align*}
    (2\pi i)^3 \int_{\infty}^{\gamma \infty}
    \begin{pmatrix}
      1 \\
      5 \, \tau \\
      25 \, \tau^2
    \end{pmatrix}
    \,
    (f_{50}, F_{50}, g_{50}) \, \dd \tau \, \equiv \, &
    (r_+, r_-)(\gamma) \,
    \begin{pmatrix}
      \omega_+ & \eta_+ & \alpha_+ \\
      \omega_- & \eta_- & \alpha_- 
    \end{pmatrix}
    \\ 
    & +
    \left(
    \chi_{\gamma}
    \begin{pmatrix}[1.25]
      \frac{1}{25}c^2 \\
      \frac{1}{5}a \, c \\
      a^2
    \end{pmatrix}
    -
    \begin{pmatrix}[1.25]
      0 \\
      0 \\
      1
    \end{pmatrix}
      \right) (\omega_{\text{b}}, \eta_{\text{b}}, \alpha_{\text{b}})
    \\
    & \mathrm{mod} \ \left(0,0,\tfrac{1}{2} (2\pi i)^2 \sqrt{5} \, \mathbb{Z}^3\right)
  \end{align*}
  in terms of the re-scaled periods from Equation (\ref{eq:FullNonModularPeriodIdentity}).
\end{theorem}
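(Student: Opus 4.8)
The plan is to transport the cocycle identity~(\ref{eq:FullNonModularPeriodIdentity}), which lives on $\pi_1(\mathcal{M}_{1/5^5}, t_0)$, to $\Gamma_0^*(50)$ by pulling everything back along the modular parametrisation $t_{50}$. The point of entry is that $\tau \to i\infty$ corresponds to $t_{50}(\tau)\to\infty$: since $h_{50} = q^{-1}+O(q)$ one has $t_{50}\sim h_{50}/5 \to \infty$, so the cusp $\infty$ lies over the base point $t_0=\infty$. Projecting the geodesic $\infty \to \gamma\infty$ through $t_{50}$ therefore produces, for each $\gamma\in\Gamma_0^*(50)$, a loop $\ell(\gamma)\in\pi_1(\mathcal{M}_{1/5^5},\infty)$, and $\ell$ is a homomorphism: the tail $\gamma\infty\to\gamma\gamma'\infty$ is the $\gamma$-translate of $\infty\to\gamma'\infty$ and, by $\Gamma_0^*(50)$-invariance of $t_{50}$, projects to the same loop $\ell(\gamma')$. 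The entire theorem will then be the $t_{50}$-pullback of~(\ref{eq:FullNonModularPeriodIdentity}) evaluated on $\ell(\gamma)$; the apparent transposition between the two identities is merely the bookkeeping that the three $\mathbb{C}^3$-components of $\varrho$ become the three weights $1,5\tau,25\tau^2$, while the three distinguished integrands become the three forms $f_{50},F_{50},g_{50}$.

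First I would fix this correspondence of integrands. The pullback of the $I$-integrand is given, so the $f_{50}$-column of the theorem reproduces $I_{1/5^5}(\ell(\gamma)) = (2\pi i)^3\int_\infty^{\gamma\infty}\left(\begin{smallmatrix}1\\5\tau\\25\tau^2\end{smallmatrix}\right)f_{50}\,\dd\tau$. Multiplying the bracket $\tfrac{1}{t(1-t)}\phi_{1/5^5}^*\varrho\,\dd t$ by the rational kernels appearing in $I'$ and $I''$ and invoking the definitions of $g_{50}$ and $F_{50}$, a short algebraic check gives that the $I'$-integrand pulls back to $-(2\pi i)^3\left(\begin{smallmatrix}1\\5\tau\\25\tau^2\end{smallmatrix}\right)g_{50}\,\dd\tau$ and that the combination $\tfrac{7}{10}\tfrac{1}{5^5}I'+\tfrac12\tfrac{1}{5^{10}}I''$ integrates $F_{50}$; the latter rests on the identity $7(1-5t)^2+30\,t\,(3-5t)=7+20t+25t^2$. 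This matches the $f_{50},F_{50},g_{50}$ columns of the theorem to the rows of~(\ref{eq:FullNonModularPeriodIdentity}) carrying the coefficients $\omega$, $\eta$, $\alpha$, respectively.

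Next I would set $r_\pm:=\widetilde{r}_\pm\circ\ell$. These are $\mathbb{Z}^3$-valued and inherit the cocycle property from $\widetilde r_\pm$ through the homomorphism $\ell$, namely $r_\pm(\gamma\gamma')=r_\pm(\gamma)+M_{\ell(\gamma)}\,r_\pm(\gamma')$, provided the topological monodromy $M_{\ell(\gamma)}$ coincides with the modular action of $\gamma$. Because the pulled-back local system is the symmetric square of the weight-two system, $M_{\ell(\gamma)}$ acts on $\left(\begin{smallmatrix}1\\5\tau\\25\tau^2\end{smallmatrix}\right)$ as $\chi_\gamma$ times $\mathrm{Sym}^2$ of $\gamma$ written in the rescaled variable $u=5\tau$, i.e.\ of $\left(\begin{smallmatrix} a & 5b\\ c/5 & d\end{smallmatrix}\right)$. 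Applying this to $(0,0,1)^{\mathsf T}$ yields exactly $\chi_\gamma\left(\tfrac{1}{25}c^2,\tfrac15 ac,a^2\right)^{\mathsf T}$, so the coboundary $\widetilde r_{\mathrm b}(\ell(\gamma))=(M_{\ell(\gamma)}-1)(0,0,1)^{\mathsf T}$ becomes the explicit term $\chi_\gamma\left(\tfrac1{25}c^2,\tfrac15 ac,a^2\right)^{\mathsf T}-(0,0,1)^{\mathsf T}$ multiplying $(\omega_{\mathrm b},\eta_{\mathrm b},\alpha_{\mathrm b})$. The residue ambiguity $\mathrm{mod}\,(0,0,\tfrac12(2\pi i)^2\sqrt5\,\mathbb{Z}^3)$ transfers verbatim: the $I'$-integrand, equivalently $g_{50}$, has non-vanishing residue at $t=1/5$, i.e.\ at the CM points $\tau_\pm$ over $t=1/5$, so moving the contour past $\tau_\pm$ changes the $g_{50}$-column integral by $2\pi i$ times that residue, which is the source of the ``every choice of contour'' clause.

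I expect the main obstacle to be the monodromy--modular dictionary together with the residue bookkeeping. One must verify that $t_{50}$ genuinely matches the punctures $\{0,t_\pm,1,\infty\}$ of $\mathcal{M}_{1/5^5}$ with the cusps and elliptic points of $X_0^*(50)$, that the resulting identification of $M_{\ell(\gamma)}$ with the character-twisted $\mathrm{Sym}^2$ action holds on the nose --- including the Atkin--Lehner character $\chi$ and the factors $\tfrac15,\tfrac1{25}$ produced by the $u=5\tau$ rescaling --- and that the residue of $g_{50}$ at $\tau_\pm$ indeed lies in the lattice $\tfrac12(2\pi i)^2\sqrt5\,\mathbb{Z}$ anticipated by the final computation of Section~\ref{sec:FiberingOut}. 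Once these are in place, the integrality and cocycle statements for $r_\pm$ and the full matrix identity follow formally from~(\ref{eq:FullNonModularPeriodIdentity}) and the pullback.
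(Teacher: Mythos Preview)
Your proposal is correct and follows essentially the same route as the paper: transport the cocycle identity~(\ref{eq:FullNonModularPeriodIdentity}) from $\pi_1(\mathcal{M}_{1/5^5},t_0)$ to $\Gamma_0^*(50)$ via the modular parametrisation $t_{50}$, set $r_\pm=\widetilde r_\pm\circ\ell$, and identify the monodromy $M_{\ell(\gamma)}$ with the character-twisted symmetric square of $\gamma$ in the rescaled coordinate $u=5\tau$, which yields precisely the explicit coboundary vector $\chi_\gamma(\tfrac{1}{25}c^2,\tfrac15 ac,a^2)^{\mathsf T}-(0,0,1)^{\mathsf T}$. The only cosmetic difference is that the paper works with a finite base point $\tau_0\in i\mathbb{R}$ over $t_0$ and then invokes the limit $t_0\to\infty$ already built into~(\ref{eq:FullNonModularPeriodIdentity}), whereas you place the base point directly at the cusp; your added verification of the algebraic kernel identity $7(1-5t)^2+30t(3-5t)=7+20t+25t^2$ matching the rows of~(\ref{eq:FullNonModularPeriodIdentity}) to $f_{50},F_{50},g_{50}$ is a detail the paper leaves implicit.
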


\begin{proof}
  We consider the largest $\tau_0 \in i \mathbb{R}$ such that $t_{50}(\tau_0) = t_0$. Further, for every $\gamma = \left(\substack{ a\, b\\ c\, d}\right) \in \Gamma_0^*(50)$, we choose a path in $t_{50}^{-1}(\mathcal{M}_{1/5^5})$ from $\tau_0$ to $\gamma \tau_0$ and denote the associated image in $\pi_1(\mathcal{M}_{1/5^5}, t_0)$ by $\widetilde{\gamma}$. This allows us to define the cocycles $r_{\pm}$ by $r_{\pm}(\gamma) = \widetilde{r}_{\pm}(\widetilde{\gamma})$ (which are independent of the choice of path from $\tau_0$ to $\gamma \tau_0$). The monodromy matrix associated with $\widetilde{\gamma}$ is
  \begin{align*}
    M_{\widetilde{\gamma}} \, = \, \chi_{\gamma}
    \begin{pmatrix}[1.25]
      d^2 & \tfrac{2}{5}c \, d & \tfrac{1}{25}c^2 \\
      5 \, b \, d & a \, d + b \, c & \tfrac{1}{5} \, a \, c \\
      25 \, b^2 & 10 \, a \, b & a^2
    \end{pmatrix}
    \, .
  \end{align*}
  Pulling back the integrals in Equation (\ref{eq:FullNonModularPeriodIdentity}) by $t_{50}$ the result follows immediately. Note that, using the action by Hecke operators, one can further show that $\omega_{\text{b}} = \eta_{\text{b}} = 0$ and numerical computations suggest that $\alpha_{\text{b}} \equiv \frac{1}{10} (2\pi i)^2 \, \sqrt{5} \ \text{mod} \ \frac{1}{2} (2\pi i)^2 \sqrt{5} \, \mathbb{Z}$.
\end{proof}

\newpage

\begin{theorem}
  The numbers $b, c, d$ can be expressed in terms of integrals of the modular forms $f_{50}$, $g_{50}$ and $F_{50}$. More precisely, in terms of the CM points $\tau_\pm = \pm \frac{2}{5}+i \, \frac{\sqrt{2}}{10}$, we have
  \begin{align*}
    b \, &= \, (2\pi i)^3 \, \int_{\tau_-}^{\tau_+} f_{50}(\tau) \, \dd \tau \\
    c \, &= \, \lim_{\epsilon \downarrow 0} \left( (2\pi i)^3 \, \int_{\tau_-}^{\tau_+} g_{50}(\tau+i\tfrac{\sqrt{2}}{5} \, \epsilon) \, \dd \tau +2\pi i \, \sqrt{5} \, \left(\tfrac{1}{\epsilon} + \log(-5 \, t_{50}'(\tau_-) \, t_{50}'(\tau_+) \, \epsilon^2) \right)\right) \\
    d \, &= \, \lim_{\epsilon \downarrow 0} \left( (2\pi i)^3 \, \int_{\tau_-}^{\tau_+} F_{50}(\tau+i\tfrac{\sqrt{2}}{5} \, \epsilon) \, \dd \tau +2\pi i \, \sqrt{5} \, \left(-\tfrac{1}{5\, t_{50}'(\tau_-) \, t_{50}'(\tau_+)} \, (\tfrac{1}{\epsilon^3}+1) -\tfrac{257}{480} \right)\right) \, ,
  \end{align*}
  where
  \begin{align*}
    t_{50}'(\tau_{\pm}) \, = \, \mp \frac{\Gamma (\tfrac{1}{8})^2 \Gamma (\tfrac{3}{8})^2}{2 \, \sqrt{10} \, \pi ^2}\, .
  \end{align*}
  Here, the integral in the expression for $c$ is along the straight line between $\tau_-$ and $\tau_+$.
\end{theorem}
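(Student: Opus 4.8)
\emph{Strategy.} The plan is to extract $b$, $c$, and $d$ from the conifold limit $\delta \to 0$ of the fibered-out period $\Pi_1$ together with its first two derivatives in $z$, and then to transport the resulting $t$-integrals to the $\tau$-upper half plane via $t_{50}$, exactly as in the proof of the preceding theorem but now along an \emph{open} contour. Writing the first row of Equation (\ref{eq:MixedPeriods}) as $\Pi_1 = -2\pi i \sqrt5\, s_1 + b\, s_2 + d\, s_3 + c\, s_4$ in terms of the local solutions $s_1 = \log(\delta)\nu(\delta) + O(\delta^3)$, $s_2 = 1 + O(\delta^3)$, $s_3 = \delta^2 + O(\delta^3)$, $s_4 = \nu(\delta)$, one reads off $b = \lim_{\delta \to 0}\Pi_1$, while $c$ and $d$ are the coefficients of $\nu(\delta)$ and $\delta^2$ and are therefore controlled by $\dv{\Pi_1}{z}$ and its second derivative. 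Since $\Pi_1 = (1,0,0)\cdot I_z(\gamma_4^{-1}\gamma_6^{-1}\gamma_2^{-1})$ and the derivatives $I_z'$, $I_z''$ are the explicit integrals of Section \ref{sec:FiberingOut}, this reduces $b$, $c$, $d$ to the first components of $I_{1/5^5}$, $I_{1/5^5}'$, $I_{1/5^5}''$ along one fixed contour. The geometric input is that both CM points satisfy $t_{50}(\tau_\pm) = t_\pm = 1/5$, so a path from $\tau_-$ to $\tau_+$ descends under $t_{50}$ to precisely the loop in $\mathcal{M}_{1/5^5}$ that computes these components; I would first pin down this homotopy and verify that the straight segment is the correct representative for the (path-dependent) integral defining $c$.

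\emph{The period $b$.} Here the argument is clean. The one-form $\tfrac{1}{t(1-t)}\phi_{1/5^5}^*\varrho\,\dd t$ is holomorphic at $t = 1/5$, so the pullback identity $t_{50}^*\bigl(\tfrac{1}{t(1-t)}\phi_{1/5^5}^*\varrho\,\dd t\bigr) = (2\pi i)^3 (1, 5\tau, 25\tau^2)\, f_{50}\,\dd\tau$ of Section \ref{sec:Modularity} applies without any boundary subtlety. Taking first components and using $b = \lim_{\delta\to 0}\Pi_1$ gives $b = (2\pi i)^3 \int_{\tau_-}^{\tau_+} f_{50}\,\dd\tau$ directly, and since $f_{50}$ is holomorphic the value is independent of the path.

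\emph{The periods $c$ and $d$.} By the very definitions of $g_{50}$ and $F_{50}$, the pullbacks of the first- and second-derivative integrands are proportional to $(1,5\tau,25\tau^2)\,g_{50}\,\dd\tau$ and to the $F_{50}$-combination appearing in Equation (\ref{eq:FullNonModularPeriodIdentity}); their first components thus produce $\int_{\tau_-}^{\tau_+} g_{50}\,\dd\tau$ and $\int_{\tau_-}^{\tau_+} F_{50}\,\dd\tau$. The essential new feature is that these integrals diverge at their endpoints: $g_{50}$ acquires a double pole and $F_{50}$ a fourth-order pole where $t_{50} = 1/5$, i.e.\ exactly at $\tau = \tau_\pm$. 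Since locally $t_{50}(\tau) - \tfrac15 \sim t_{50}'(\tau_\pm)(\tau - \tau_\pm)$, the shift $\tau \mapsto \tau + i\tfrac{\sqrt2}{5}\epsilon$ turns these into $\tfrac1\epsilon$ and $\tfrac{1}{\epsilon^3}$ divergences whose coefficients are governed by $t_{50}'(\tau_\pm)$, accompanied by the logarithm already visible in the expansion of $s_1$. I would match these against the $\delta\to 0$ divergences on the fibered-out side — recall that the poles at $t = 1/5$ have vanishing residue for $z < 1/5^5$ but are dragged onto the contour as $t_\pm \to 1/5$ — which fixes the explicit counterterms $2\pi i\sqrt5\bigl(\tfrac1\epsilon + \log(\cdots)\bigr)$ and $2\pi i\sqrt5\bigl(-\tfrac{1}{5\,t_{50}'(\tau_-)t_{50}'(\tau_+)}(\tfrac{1}{\epsilon^3}+1) - \tfrac{257}{480}\bigr)$. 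Finally, $\tau_\pm$ are CM points for $\mathbb{Q}(\sqrt{-2})$, since they satisfy $50\tau^2 \mp 40\tau + 9 = 0$ with discriminant $-200$; I would evaluate $t_{50}'(\tau_\pm)$ from this CM structure via the Chowla-Selberg formula, yielding the quotient of $\Gamma$-values $\Gamma(\tfrac18)^2\Gamma(\tfrac38)^2$.

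\emph{Main obstacle.} The hard part is entirely the bookkeeping of the endpoint singularities: reproducing the precise finite constants (the $\tfrac{257}{480}$, the $+1$ inside the cubic counterterm, and the argument of the logarithm) from the local Taylor expansion of $t_{50}$ at the two CM points, and checking that the purely imaginary regularized integrals combine with these counterterms to give exactly the imaginary numbers $b$, $c$, $d$ demanded by Equation (\ref{eq:MixedPeriods}). Establishing the cleanest possible dictionary between the conifold regulator $\delta$ and the contour regulator $\epsilon$ — so that the two sets of subtractions can be compared term by term — is what makes the computation delicate rather than routine.
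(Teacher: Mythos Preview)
Your strategy is the paper's strategy: start from $\Pi_1^{(k)}(z) = (1,0,0)\cdot I_z^{(k)}(\gamma_4^{-1}\gamma_6^{-1}\gamma_2^{-1})$, take the conifold limit, and pull back the open contour to a path between the two CM preimages of $t=1/5$. The handling of $b$ and the identification of the endpoint divergences for $c$ and $d$ are the same in both.

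The one substantive difference is in how the finite counterterms and the value of $t_{50}'(\tau_\pm)$ are obtained. The paper does \emph{not} invoke Chowla--Selberg. Instead it splits the $t$-contour into two rays $\gamma_{\pm\epsilon}$ and a small arc $\gamma_0$ around $t=1/5$, and evaluates the $\gamma_0$-contribution using the explicit local expansion of $\varrho$ near the hypergeometric singularity $x = 1 - 2^8 t = 0$, whose connection matrix carries the numbers $\Gamma(\tfrac18)^2\Gamma(\tfrac38)^2$ and $\Gamma(\tfrac58)^2\Gamma(\tfrac78)^2$. This single expansion does triple duty: it produces the $\Gamma$-values in $t_{50}'(\tau_\pm)$, it fixes the exact finite constants (the $\tfrac{257}{480}$, the $+1$, the argument of the logarithm), and---together with the known monodromy matrices---it pins down that the correct $\tau$-preimages are $\tau_\pm = \pm\tfrac25 + i\tfrac{\sqrt2}{10}$ rather than some other preimages of $1/5$. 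Your Chowla--Selberg route would recover $t_{50}'(\tau_\pm)$ but would not by itself supply the remaining constants or the identification of the path; you would still need the local analysis on the $t$-side to close the argument. So your ``main obstacle'' is precisely what the $\varrho$-expansion resolves in one stroke, and bringing that expansion into your write-up is the missing concrete ingredient.
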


\begin{proof}
   We use the identity
  \begin{align*}
    \Pi_1^{(k)}(z) \, &= \, \begin{pmatrix}
    1 \\
    0 \\
    0
  \end{pmatrix}
  \cdot I_z^{(k)}(\gamma_4^{-1}\gamma_6^{-1}\gamma_2^{-1})
  \end{align*}
  from Section \ref{sec:FiberingOut}. In the limit $\delta = 1-5^5 \, z \rightarrow 0$, the loop $\gamma_6$ gets pinched at $1/5$ and we have to be careful with divergencies. We take the limit $t_0 \rightarrow \infty$ and decompose $\gamma_4^{-1}\gamma_6^{-1}\gamma_2^{-1}$ into the three parts $\gamma_{-\epsilon}$, $\gamma_0$, $\gamma_{+\epsilon}$ depicted below:
  
  \begin{figure}[h]
    \centering

    \tikzset{
      partial ellipse/.style args={#1:#2:#3}{
        insert path={+ (#1:#3) arc (#1:#2:#3)}
      }
    }
    
    \begin{tikzpicture}[scale=0.7]
      
      \draw[fill=black] (-3,0) circle (0.5ex) node [below] {$t_-$};
      \draw[fill=black] (3,0) circle (0.5ex) node [below] {$t_+$};
      
      \draw[thick] (-0.1,-0.1) -- (0.1,0.1);
      \draw[thick] (-0.1,0.1) -- (0.1,-0.1);
      \draw (-0.1,0) node [left] {$\tfrac{1}{5}$};

      \begin{scope}[shift={(0,1.5)}]
        \draw[thick] (-0.1,-0.1) -- (0.1,0.1);
        \draw[thick] (-0.1,0.1) -- (0.1,-0.1);
        \draw (-0.1,0) node [left] {$\tfrac{1}{5}+i \, \epsilon$};
      \end{scope}
      
      \begin{scope}[shift={(0,-1.5)}]
        \draw[thick] (-0.1,-0.1) -- (0.1,0.1);
        \draw[thick] (-0.1,0.1) -- (0.1,-0.1);
        \draw (-0.1,0) node [left] {$\tfrac{1}{5}-i \, \epsilon$};
      \end{scope}
      
      \draw[->] (0,0) [partial ellipse=-80:80:1cm and 1.5cm];
      \draw (1,0) node [right] {$\gamma_0$};
      
      \draw[->] (0,-4) -- (0,-1.7);
      \draw (0,-3) node [right] {$\gamma_{-\epsilon}$};

      \draw[->] (0,1.7) -- (0,4);
      \draw (0,3) node [right] {$\gamma_{+\epsilon}$};
    \end{tikzpicture}
  \end{figure}

  \FloatBarrier
  
  \noindent For fixed $\epsilon$, we can take the limit $\delta \rightarrow 0$ for the integrals over $\gamma_{\pm\epsilon}$ and pull these back to the upper half-plane. The asymptotics of the integral over $\gamma_0$ in the limit where we first take $\delta \rightarrow 0$ and then~$\epsilon \rightarrow 0$ can be obtained from the expansion
  \begin{align*}
    \varrho(t) \, = \,
    \begin{pmatrix}
      -\frac{1}{2\pi} \, \Gamma(1/8)^2 \, \Gamma(3/8)^2 & 4\sqrt{2}\, \pi & -\frac{2}{\pi} \, \Gamma(5/8)^2 \, \Gamma(7/8)^2 \\[6pt]
      -\frac{\sqrt{2} i}{4\pi} \, \Gamma(1/8)^2 \, \Gamma(3/8)^2 & 0 & \frac{\sqrt{2}i}{\pi} \, \Gamma(5/8)^2 \, \Gamma(7/8)^2 \\[6pt]
      \frac{1}{4\pi} \, \Gamma(1/8)^2 \, \Gamma(3/8)^2 & 2\sqrt{2} \, \pi & \frac{1}{\pi} \, \Gamma(5/8)^2 \, \Gamma(7/8)^2
    \end{pmatrix}
    \begin{pmatrix}
      1+\frac{3}{16} x + O(x^2) \\[6pt]
      \sqrt{x} \, (1+ O(x)) \\[6pt]
      x + O(x^2)
    \end{pmatrix}
  \end{align*}
  in terms of $x = 1-2^8 \, t$. From this expansion and the monodromy matrices, we also find that for~$\epsilon \rightarrow 0$ we can pull back the sum of the paths $\gamma_{\pm\epsilon}$ to the path from $\tau_- = -\frac{2}{5}+i \, \frac{\sqrt{2}}{10}$ to $\tau_+ = \frac{2}{5}+i \, \frac{\sqrt{2}}{10}$. The evaluation of $b$ is now straightforward since $\gamma_0$ does not contribute in the limit $\epsilon,\delta \rightarrow 0$ and thus
  \begin{align*}
    b \, = \, \lim_{z \rightarrow 1/5^5} \, \Pi_1(z) \, = \, (2\pi i)^3 \, \int_{\tau_-}^{\tau_+} f_{50}(\tau) \, \dd \tau \, .
  \end{align*}
  To obtain $d$ and $c$, we need to subtract divergent contributions and include the contributions of~$\gamma_0$. After reparametrizing $\epsilon$, this gives
  \begin{align*}
    c \, &= \, \lim_{z \rightarrow 1/5^5} \, \left( \Pi_1(z) +2\pi i \, \sqrt{5} \, \log(\delta) \right) \\
         &= \,  \lim_{\epsilon \downarrow 0} \left( (2\pi i)^3 \, \int_{\tau_-}^{\tau_+} g_{50}(\tau+i\tfrac{\sqrt{2}}{5} \, \epsilon) \, \dd \tau +2\pi i \, \sqrt{5} \, \left(\tfrac{1}{\epsilon} + \log(-5 \, t_{50}'(\tau_-) \, t_{50}'(\tau_+) \, \epsilon^2) \right)\right) \\
    d \, &= \, \lim_{z \rightarrow 1/5^5} \, \left(\frac{1}{2} \dv{^2}{\delta^2} \Pi_1(z)-\frac{7}{10} \dv{}{\delta} \Pi_1(z)+2\pi i \, \sqrt{5} \, \left(\frac{1}{2 \, \delta}+\frac{7}{20} \right) \right) \\
         &= \, \lim_{\epsilon \downarrow 0} \left( (2\pi i)^3 \, \int_{\tau_-}^{\tau_+} F_{50}(\tau+i\tfrac{\sqrt{2}}{5} \, \epsilon) \, \dd \tau +2\pi i \, \sqrt{5} \, \left(-\tfrac{1}{5\, t_{50}'(\tau_-) \, t_{50}'(\tau_+)} \, (\tfrac{1}{\epsilon^3}+1) -\tfrac{257}{480} \right)\right)  \\
  \end{align*}
  and from the expansion of $\varrho$ one obtains the given values of $t_{50}'(\tau_{\pm})$. Note that due to the residues of $g_{50}$, variations of the contour of integration can shift the integral in the expression for $c$ by integer multiples of $(2\pi i)^2 \, \sqrt{5}$. The equality above holds for the straight line between $\tau_-$ and $\tau_+$.
\end{proof}

\section{Outlook}
The method outlined in this paper allows to prove the modularity of mixed period matrices of many families of Calabi-Yau threefolds. However, it also leads to open questions and possible future applications, some of which we want to mention here. 

\subsection*{Periods of meromorphic modular forms with non-vanishing residues}
The modular form $g_{50}$ has associated periods which are well-defined modulo contributions from its residues. In particular, $g_{50}$ must be a Hecke eigenform modulo modular forms whose periods lie in $\frac{1}{2}(2\pi i)^2 \sqrt{5} \, \mathbb{Z}^3$. In \cite{MagneticModularForms} it has been conjectured that such boundary terms come from magnetic modular forms (coined in \cite{BroadhurstZudilin}) and for our example numerical computations indeed suggest that for all primes $p \neq 2, 5$ the Hecke action gives $g_{50}|_4T_p \equiv a_p \, g_{50}$ modulo magnetic modular forms (i.e. modular forms whose $n$-th Fourier coefficients are divisible by $n$). 

\subsection*{Explicit algebraic correspondences}
While we have discussed the ``fibering out'' purely on the level of periods, it is not too hard to turn it into an explicit algebraic correspondence between the conifold fiber and a Kuga-Sato threefold. Such correspondences might be useful for future studies and we plan on giving more details in a future publication. 

\subsection*{Height and leading coefficients of $\boldsymbol{L}$-functions}
In \cite{KilianThesis} the relation
\begin{align*}
    1+ \frac{1}{2\pi i \, \sqrt{5} \, w_-} \, \det
  \left(
  \begin{array}{ll}
    b & c \\
    w_- & a_-
  \end{array}
  \right)
 \, = \, 
 -\frac{5}{3} \log 5 - \frac{125}{6} \frac{2 \pi i \, \sqrt{5} \, L'(f\otimes \chi,2)}{w_-}
\end{align*}
between a height and a leading coefficient of an $L$-function has been conjectured based on numerical computations. Here, $\chi$ is the quadratic Dirichlet character associated with $\mathbb{Q}(\sqrt{5})$. The identification of all occurring numbers in terms of integrals of modular forms might be a first step towards proving this relation. 

\newpage

\appendix

\section{Results for other families of Calabi-Yau threefolds}
\label{sec:AppendixOtherFamilies}

For analogous proofs for other hypergeometric families of Calabi-Yau threefolds, one can use the  identities given in Table 12 in \cite{DoranIdentities}. Below we reproduce a part of this table for our use case. Each row in the table contains hypergeometric indices $\{a_1,a_2,a_3,a_4\}$, hypergeometric indices $\{b_1,b_2,b_3\}$ and parameters $k$, $l$, $\beta$ and corresponds to the identity
\begin{align*}
  \pFq{4}{3}{a_1,a_2,a_3,a_4}{1,1,1}{z} \, = \, \frac{1}{2\pi i}\oint_{|t| = \frac{k}{k+l}} \frac{1}{(1-t)^\beta} \ \pFq{3}{2}{b_1,b_2,b_3}{1,1}{\frac{k^k \, l^l}{(k+l)^{k+l}} \frac{z}{t^k \, (1-t)^l}} \frac{\dd t}{t}
\end{align*}
for $|z| \leq 1$:

\begin{table}[h]
  \centering
  \definecolor{light-gray}{gray}{0.5}
  \begin{align*}
    \renewcommand{\arraystretch}{1.2}
    \begin{array}[t]{c !{\color{black}\vrule} c !{\color{black}\vrule} c !{\color{black}\vrule} c}
      a_1,a_2,a_3,a_4 & b_1,b_2,b_3 & k,l,\beta & N \\ \hline
      \frac{1}{2},\frac{1}{2},\frac{1}{2},\frac{1}{2} & \frac{1}{2},\frac{1}{2},\frac{1}{2}  & 1,1,1  & 16  \\ \arrayrulecolor{light-gray} \hline
      \frac{1}{4},\frac{1}{3},\frac{2}{3},\frac{3}{4} & \frac{1}{3},\frac{1}{2},\frac{2}{3}  & 2,2,1  & 48  \\
                      & \frac{1}{3},\frac{1}{2},\frac{2}{3}  & 1,1,\frac{1}{2}  & 48  \\
                      & \frac{1}{4},\frac{1}{2},\frac{3}{4}  & 2,1,1  & 18  \\  \hline
      \frac{1}{4},\frac{1}{2},\frac{1}{2},\frac{3}{4} & \frac{1}{2},\frac{1}{2},\frac{1}{2}  & 2,2,1  & 64  \\
                      & \frac{1}{2},\frac{1}{2},\frac{1}{2}  & 1,1,\frac{1}{2}  & 64  \\
                      & \frac{1}{3},\frac{1}{2},\frac{2}{3}  & 3,1,1  & 48  \\
                      & \frac{1}{4},\frac{1}{2},\frac{3}{4}  & 1,1,1  & 8  \\  \hline
      \frac{1}{5},\frac{2}{5},\frac{3}{5},\frac{4}{5} & \frac{1}{3},\frac{1}{2},\frac{2}{3}  & 3,2,1  & 75  \\
                      & \frac{1}{4},\frac{1}{2},\frac{3}{4}  & 4,1,1  & 50  \\ \hline
      \frac{1}{3},\frac{1}{3},\frac{2}{3},\frac{2}{3} & \frac{1}{3},\frac{1}{2},\frac{2}{3}  & 2,1,1  & 27  \\ \hline
      \frac{1}{4},\frac{1}{4},\frac{3}{4},\frac{3}{4} & \frac{1}{4},\frac{1}{2},\frac{3}{4}  & 2,2,1  & 32  \\
                      & \frac{1}{4},\frac{1}{2},\frac{3}{4}  & 1,1,\frac{1}{2}  & 32  \\ \hline
      \frac{1}{3},\frac{1}{2},\frac{1}{2},\frac{2}{3} & \frac{1}{2},\frac{1}{2},\frac{1}{2}  & 2,1,1  &  \text{---}  \\
                      & \frac{1}{3},\frac{1}{2},\frac{2}{3}  & 1,1,1  & 12  \\ \hline
      \frac{1}{6},\frac{1}{2},\frac{1}{2},\frac{5}{6} & \frac{1}{2},\frac{1}{2},\frac{1}{2}  & 2,1,\frac{1}{2}  & \text{---}  \\
                      & \frac{1}{3},\frac{1}{2},\frac{2}{3}  & 3,3,1  & \text{---}  \\
    \end{array}
    \qquad
    \begin{array}[t]{c !{\color{black}\vrule} c !{\color{black}\vrule} c !{\color{black}\vrule} c}
      a_1,a_2,a_3,a_4 & b_1,b_2,b_3 & k,l,\beta & N \\ \hline
                      & \frac{1}{4},\frac{1}{2},\frac{3}{4}  & 1,2,\frac{1}{2}  & 72  \\
                      & \frac{1}{6},\frac{1}{2},\frac{5}{6}  & 1,1,1  & \text{---}   \\ \arrayrulecolor{light-gray} \hline
      \frac{1}{6},\frac{1}{3},\frac{2}{3},\frac{5}{6} & \frac{1}{3},\frac{1}{2},\frac{2}{3}  & 2,1,\frac{1}{2}  & 108  \\
                      & \frac{1}{4},\frac{1}{2},\frac{3}{4}  & 4,2,1  & \text{---}   \\
                      & \frac{1}{6},\frac{1}{2},\frac{5}{6}  & 2,1,1  & \text{---}  \\ \hline
      \frac{1}{8},\frac{3}{8},\frac{5}{8},\frac{7}{8} & \frac{1}{3},\frac{1}{2},\frac{2}{3}  & 3,1,\frac{1}{2}  & \text{---}   \\
                      & \frac{1}{4},\frac{1}{2},\frac{3}{4}  & 4,4,1  & 128  \\
                      & \frac{1}{4},\frac{1}{2},\frac{3}{4}  & 2,2,\frac{1}{2}  & 128  \\
                      & \frac{1}{6},\frac{1}{2},\frac{5}{6}  & 1,3,\frac{1}{2}  & \text{---}   \\ \hline
      \frac{1}{6},\frac{1}{4},\frac{3}{4},\frac{5}{6} & \frac{1}{4},\frac{1}{2},\frac{3}{4}  & 2,1,\frac{1}{2}  & 72  \\
                      & \frac{1}{6},\frac{1}{2},\frac{5}{6}  & 2,2,1  & \text{---}   \\
                      & \frac{1}{6},\frac{1}{2},\frac{5}{6}  & 1,1,\frac{1}{2}  & \text{---}   \\ \hline
      \frac{1}{10},\frac{3}{10},\frac{7}{10},\frac{9}{10} & \frac{1}{4},\frac{1}{2},\frac{3}{4}  & 4,1,\frac{1}{2}  & 200  \\
                      & \frac{1}{6},\frac{1}{2},\frac{5}{6}  & 2,3,\frac{1}{2}  & \text{---}   \\ \hline
      \frac{1}{6},\frac{1}{6},\frac{5}{6},\frac{5}{6} & \frac{1}{6},\frac{1}{2},\frac{5}{6}  & 2,1,\frac{1}{2}  & \text{---}   \\ \hline
      \frac{1}{12},\frac{5}{12},\frac{7}{12},\frac{11}{12} & \frac{1}{4},\frac{1}{2},\frac{3}{4}  & 4,2,\frac{1}{2}  &  \text{---}  \\ \hline
    \end{array}
  \end{align*}
\end{table}
\noindent Using these identities, one proceeds as for our main example with hypergeometric indices $\{\frac{1}{5},\frac{2}{5},\frac{3}{5},\frac{4}{5}\}$. However, it is not guaranteed that the pullback of the integrands to the upper half-plane gives modular forms. If this is the case, the last column in the table gives the level $N$ and one can proceed completely analogous to our main example. \\ \, \\
We remark that the identities which do not give cusp forms $f_N$ still give interesting identities. In general, one obtains instead of a cusp form a modular form multiplied by an algebraic function of a modular function. \\

As a non-hypergeometric example, we mention the identity 
\begin{align*}
    f_{l}(z) \, = \, \frac{1}{2\pi i} \oint_{|t| = \frac{1}{l+1}} \frac{1}{(1-z/t) \, (1-t)} \, f_{l-1}\left( \frac{z}{(1-z/t) \, (1-t)} \right) \, \frac{\dd t}{t}
\end{align*}
for
\begin{align*}
    f_l(z) \, = \, \sum_{n_1=\cdots =n_{l+1} = 0}^\infty \left( \frac{(n_1+\cdots +n_{l+1})!}{n_1!\cdots n_{l+1}!} \right)^2 \, z^{n_1+\cdots n_{l+1}} \, ,
\end{align*}
which holds for $|z| \leq 1/(l+1)^2$. Each $f_{l}$ is associated with a family of Calabi-Yau $(l-1)$-folds which in \cite{BananaIntegrals} have also been associated with so-called $l$-loop banana integrals. The family of Calabi-Yau threefolds associated with $f_4$ has been studied in \cite{AttractorPoints} with regards to attractor points. For this, the identity above combined with the analysis exemplified in the main part of this paper allows to prove the modularity of the mixed period matrices associated with the limits $z \rightarrow 1/9$ and $z \rightarrow 1$.

\bibliographystyle{plain}
\bibliography{References}

\begin{thebibliography}{10}

\bibitem{KilianThesis}
Kilian B{\"o}nisch.
\newblock {\em Modularity of special motives of rank four associated with
  Calabi-Yau threefolds}.
\newblock PhD thesis, Rheinische Friedrich-Wilhelms-Universit{\"a}t Bonn, 2023.

\bibitem{MagneticModularForms}
Kilian B{\"o}nisch, Claude Duhr, and Sara Maggio.
\newblock Some conjectures around magnetic modular forms.
\newblock {\em arXiv preprint arXiv:2404.04085}, 2024.

\bibitem{BananaIntegrals}
Kilian B{\"o}nisch, Fabian Fischbach, Albrecht Klemm, Christoph Nega, and Reza
  Safari.
\newblock Analytic structure of all loop banana integrals.
\newblock {\em Journal of High Energy Physics}, 2021(5):1--41, 2021.

\bibitem{ADEK}
Kilian B{\"o}nisch, Albrecht Klemm, Emanuel Scheidegger, and Don Zagier.
\newblock {D}-brane masses at special fibres of hypergeometric families of
  {C}alabi-{Y}au threefolds, modular forms, and periods.
\newblock {\em Communications in Mathematical Physics}, 405(6):134, 2024.

\bibitem{BroadhurstZudilin}
David Broadhurst and Wadim Zudilin.
\newblock A magnetic double integral.
\newblock {\em Journal of the Australian Mathematical Society}, 107(1):9--25,
  2019.

\bibitem{AttractorPoints}
Philip Candelas, Xenia de~la Ossa, Mohamed Elmi, and Duco van Straten.
\newblock A one parameter family of {C}alabi-{Y}au manifolds with attractor
  points of rank two.
\newblock {\em Journal of High Energy Physics}, 2020(10):1--75, 2020.

\bibitem{MirrorSymmetry}
Philip Candelas, Xenia de~la Ossa, Paul Green, and Linda Parkes.
\newblock {An exactly soluble superconformal theory from a mirror pair of
  {C}alabi-{Y}au manifolds}.
\newblock {\em Phys. Lett. B}, 258:118--126, 1991.

\bibitem{DoranIdentities}
Charles Doran and Andreas Malmendier.
\newblock Calabi-{Y}au manifolds realizing symplectically rigid monodromy
  tuples.
\newblock {\em Adv. Theor. Math. Phys.}, 23(5):1271--1359, 2019.

\bibitem{GolyshevZagier}
Vasily Golyshev and Don Zagier.
\newblock Interpolated ap{\'e}ry numbers, quasiperiods of modular forms, and
  motivic gamma functions.
\newblock In {\em Proceedings of Symposia in Pure Mathematics}, volume 103,
  2021.

\bibitem{Schoen}
Chad Schoen.
\newblock On the geometry of a special determinantal hypersurface associated to
  the {M}umford-{H}orrocks vector bundle.
\newblock {\em Journal f{\"u}r die reine und angewandte Mathematik},
  364:85--111, 1986.

\end{thebibliography}

\vspace{20pt}

\end{document}